\theoremstyle{plain}
\newtheorem{theorem}{Theorem}
\newtheorem{lemma}{Lemma}
\def\Chi{\mathcal X}
\def\RR{{\mathbb R}}
\def\Vol{\mbox{\rm Vol}}
\def\E{\mathcal{E}}
\begin{document}

\title [Geometric stability of the Coulomb energy]
{Geometric stability \\of the Coulomb energy}

\author{Almut Burchard and Gregory R. Chambers}
\address{A. Burchard, University of Toronto, Department of Mathematics,
\newline 40 St. George Street, Room 6290, Toronto, Canada M5S 2E4 
\newline {\tt almut@$\,$math.toronto.edu}}

\address{G.R. Chambers, University of Chicago, Department of Mathematics,
\newline 5734 S. University Avenue, Room 208 C, Chicago, IL 60637 
\newline {\tt chambers@$\,$math.uchicago.edu} }
\date{July 2, 2015}
\begin{abstract} 
The Coulomb energy of a charge 
that is uniformly distributed on some set
is maximized (among sets of given volume) by balls.
It is shown here that near-maximizers are close to balls.
\end{abstract} 

\subjclass[2000]{26D15 (31B05, 35J35, 51M16)}

\maketitle


\maketitle

\section{Introduction and main result}

The {\em Coulomb energy} of a charge distribution $f$
on $\RR^3$ is --- up to a multiplicative physical constant ---
given by the singular integral
$$ 
\E(f)=
\int_{\RR^3}\int_{\RR^3} \frac{f(x)f(y)}{|x-y|}\, dxdy\,.
$$
According to the Riesz-Sobolev inequality,
the energy of a positive charge distribution increases 
under symmetric decreasing rearrangement: If $f^*$ is 
radially decreasing and equimeasurable with $f$, then 
\begin{equation}
\label{eq:Riesz}
\E(f)\le \E(f^*)\,.
\end{equation}
The physical reason is that symmetrization
increases the interaction of the charges by reducing
the typical distance between them.  Equality holds 
only if the charge distribution is 
already radially decreasing about some point in 
$\RR^3$~\cite{L-Choquard}.  Is this characterization of
equality cases stable?  If the two sides of 
Eq.~(\ref{eq:Riesz}) almost agree, 
how close must $f$ be to a translate of~$f^*$?

We answer this question for charge distributions that 
are uniform on some set $A\subset \RR^3$ of finite volume. 
Let $A^*$ be the ball of the same volume.
With a slight abuse of notation, denote by
$$
\E(A) = \int_A\int_A |x-y|^{-1}\, dxdy
$$
the Coulomb energy of the uniform charge distribution on $A$.

\begin{theorem} \label{thm:sharp-3}
There exists a constant $c>0$ such that
\begin{equation}
\label{eq:sharp-3} 
\frac{\E(A^*) -\E(A)}{\E(A^*)}
\ge c \left( \inf_\tau \frac{\Vol\bigl((\tau A)\bigtriangleup A^*\bigr)}
{2\Vol(A)}
\right)^2\,.
\end{equation}
for every $A\subset \RR^3$ of finite positive volume.
Here, $\tau$ runs over all translations in $\RR^3$,
and $\bigtriangleup$ denotes the symmetric difference.
\end{theorem}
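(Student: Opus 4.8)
The plan is to normalize the configuration and then play an exact expansion of the deficit against a spectral computation. By homogeneity I may assume $A^{*}=B$ is the unit ball and $\Vol(A)=\Vol(B)=\tfrac{4\pi}{3}$; writing $m=\tfrac12\Vol(A\bigtriangleup B)$, the right-hand quantity in \eqref{eq:sharp-3} equals $\alpha=m/\Vol(B)$ once $A$ is translated so the infimum is attained. A qualitative compactness fact — near-maximizers of $\E$ converge in $L^{1}$ to a ball — further reduces matters to $\alpha$, hence $m$, small. Put $h=\mathbf 1_{A}-\mathbf 1_{B}$ (so $\int h=0$), let $K$ be convolution with $|x|^{-1}$ and $u_{B}=K\mathbf 1_{B}$; expanding the quadratic form $\E$ gives the exact identity
\[
\E(B)-\E(A)\;=\;-2\!\int u_{B}\,h\;-\;\langle h,Kh\rangle .
\]
The first term is $\ge 0$, while the second, $\langle h,Kh\rangle=\|h\|_{\dot H^{-1}}^{2}\ge 0$, works against us, so everything comes down to showing the first term exceeds the second by at least a fixed multiple of $m^{2}$.

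Since $u_{B}$ is radial, with $u_{B}(y)=\tfrac{4\pi}{3}|y|^{-1}$ for $|y|\ge 1$ and $u_{B}(y)=2\pi-\tfrac{2\pi}{3}|y|^{2}$ for $|y|\le1$, the linear term depends only on the radial distribution of $A$, and a short computation gives
\[
-2\!\int u_{B}\,h \;=\; \frac{8\pi}{3}\int_{A\setminus B}\Bigl(1-\tfrac1{|y|}\Bigr)dy\;+\;\frac{4\pi}{3}\int_{B\setminus A}\bigl(1-|y|^{2}\bigr)dy .
\]
Both integrands are comparable to $\min\bigl(\mathrm{dist}(y,\partial B),1\bigr)$, so a bathtub-principle estimate (pushing the radial profiles of $A\setminus B$ and $B\setminus A$ toward $\partial B$ minimizes these integrals) bounds the linear term below by $\tfrac23 m^{2}-O(m^{3})$, and, crucially, by $\gtrsim m$ as soon as a fixed fraction of $A\bigtriangleup B$ sits at distance bounded below from $\partial B$. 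The same bathtub principle bounds the potentials of $A\setminus B$ and $B\setminus A$ pointwise by $Cm^{2/3}$, so $\langle h,Kh\rangle\le\|h\|_{L^{1}}\cdot Cm^{2/3}\le C'm^{5/3}$. Hence if $A$ is \emph{not} concentrated in a thin annulus about $\partial B$ the linear term alone gives $\E(B)-\E(A)\gtrsim m\gg\alpha^{2}$; it remains to treat the case where $A\bigtriangleup B$ lies in a thin annulus, i.e. $A$ is a nearly spherical set, $\partial A=\{(1+\eps\phi(\omega))\omega:\omega\in S^{2}\}$ with $\eps$ small and $\int_{S^{2}}\phi=0$. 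Making this reduction rigorous — removing the small-volume part of $A$ far from $\partial B$ and upgrading $L^{1}$-closeness to the graph form with uniform constants — is the technical heart and the step I expect to be the main obstacle.

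In the nearly spherical regime both terms linearize. The displayed formula for the linear term gives $-2\int u_{B}h=\tfrac{4\pi}{3}\eps^{2}\|\phi\|_{L^{2}(S^{2})}^{2}+o(\eps^{2})$, while $h$ acts to leading order like the surface charge $\eps\phi\,d\mathcal H^{2}$ on $\partial B$, so that $\langle h,Kh\rangle=\eps^{2}\langle\phi,\mathcal S\phi\rangle_{L^{2}(S^{2})}+o(\eps^{2})$, where $\mathcal S$ is the single-layer operator on the sphere whose eigenvalue on degree-$\ell$ spherical harmonics is $\tfrac{4\pi}{2\ell+1}$. Therefore
\[
\E(B)-\E(A)\;=\;4\pi\,\eps^{2}\sum_{\ell\ge0}\Bigl(\frac13-\frac1{2\ell+1}\Bigr)\|\phi_{\ell}\|_{L^{2}(S^{2})}^{2}\;+\;o(\eps^{2}).
\]
The $\ell=0$ coefficient is negative, but $\phi_{0}=0$ by the first-order volume constraint; the $\ell=1$ coefficient vanishes, but the $\ell=1$ part of $\phi$ is precisely what a suitable choice of translation $\tau$ removes, so it does not enter $\alpha$; and for every $\ell\ge2$ the coefficient is $\ge 4\pi(\tfrac13-\tfrac15)=\tfrac{8\pi}{15}>0$. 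Consequently $\E(B)-\E(A)\gtrsim\eps^{2}\|\phi_{\ge2}\|_{L^{2}(S^{2})}^{2}\gtrsim\eps^{2}\|\phi\|_{L^{1}(S^{2})}^{2}\sim\alpha^{2}$, uniformly in the frequency, which is \eqref{eq:sharp-3}. The work that remains is to justify the reduction of the previous paragraph and to control the $o(\eps^{2})$ remainders uniformly.
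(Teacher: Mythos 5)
Your outline is the classical ``Fuglede-type'' strategy (expand the deficit as a linear term in $h=\Chi_A-\Chi_{A^*}$ minus the quadratic form $\langle h,Kh\rangle$, then diagonalize on spherical harmonics for nearly spherical sets), and the pieces you do compute are correct: the identity $\E(A^*)-\E(A)=-2\int \Phi_{A^*}h-\langle h,Kh\rangle$, the bathtub lower bound $\tfrac23m^2-O(m^3)$ for the linear term, the crude bound $\langle h,Kh\rangle\lesssim m^{5/3}$, and the single-layer eigenvalues $\tfrac{4\pi}{2\ell+1}$ with the resulting coefficients $\tfrac13-\tfrac1{2\ell+1}$ (negative at $\ell=0$, zero at $\ell=1$, bounded below by $\tfrac2{15}$ for $\ell\ge2$). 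But the step you yourself flag as ``the technical heart'' is a genuine gap, and it is the entire difficulty of the theorem. Your dichotomy only yields, in the bad case, that most of $A\bigtriangleup A^*$ lies within a \emph{fixed} distance $d_0$ of $\partial A^*$ (the threshold at which the linear term $\gtrsim d_0 m$ beats $C'm^{5/3}$ forces $d_0$ to be of order a constant, or at best a power of $m$); this is very far from the hypothesis $\partial A=\{(1+\eps\phi(\omega))\omega\}$ with $\eps$ small needed for the spectral computation. Even if $A\bigtriangleup A^*$ sat in an arbitrarily thin annulus, $A$ need not be star-shaped and $h(r\omega)$ need not have a single sign along each ray, so $h$ is not approximated by a signed surface charge $\eps\phi\,d\mathcal H^2$ and the replacement of $\langle h,Kh\rangle$ by $\eps^2\langle\phi,\mathcal S\phi\rangle$ is not justified; moreover the removal of the degenerate $\ell=1$ mode by the optimal translation and the uniform control of the $o(\eps^2)$ remainders are exactly the points where such arguments historically require real work. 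As written, the proof proves the theorem only for nearly spherical graphs over the sphere.

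For comparison, the paper takes a route that avoids this reduction altogether: reflection positivity of the kernel together with the Fusco--Maggi--Pratelli symmetrization lemma reduces the problem (at the cost of a factor $2^3$ in the deficit and a dimensional constant in the asymmetry) to sets symmetric under $x\mapsto-x$, for which the asymmetry is comparable to the symmetric difference from the \emph{centered} ball; after truncating to a ball of radius $1+O(\alpha_0^{2/3})$ (Lemma~\ref{lem:bdd}), a shell-by-shell identity (Lemma~\ref{lem:outside}) reduces everything to the elementary pointwise bound $\Phi_A(x)\le\Phi_{B_r}(x)-(\sqrt2 r)^{-\lambda}\Vol(B_r\setminus A)$ on $\partial B_r$ for centrally symmetric $A\subset B_r$ (Lemma~\ref{lem:key-3}), whose leading coefficient happens to be positive precisely in dimension $3$. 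If you want to pursue your approach, the missing reduction is of the same nature as the hardest part of the Fusco--Maggi--Pratelli proof of the sharp quantitative isoperimetric inequality, and you should expect to need tools of that strength; alternatively, you would need a direct argument bounding $\langle h,Kh\rangle$ by the linear term for arbitrary $h$ supported near the sphere, which is where the translation degeneracy at $\ell=1$ resurfaces.
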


The exponent 2 is best possible; it is achieved for sets constructed from
the unit ball by removing an annulus
whose outer boundary is the unit sphere,
and adding an annulus of the same volume
whose inner boundary is the unit sphere.

Geometric stability results where a {\em deficit}
(the deviation of a functional from its optimal value)
controls some measure of {\em asymmetry} (the distance from the manifold of 
optimizers) have been established for many classical inequalities.
The first results in that direction, due to Bonnesen in the 1920s,
were quantitative improvements of the isoperimetric
inequality for convex sets in the plane.
Two papers from the early 1990s have inspired much
recent progress.  One is Hall's work on the isoperimetric
inequality in $\RR^n$, where
he proves  stability and raises the question
of optimal exponents~\cite{Hall}; the other is
the result of Bianchi and Egnell
on the stability of the Sobolev inequality
for $||\nabla f||^2$ in dimension $n\ge 3$~\cite{BE}.
We refer the interested reader to the surveys~\cite{O,M}.

Less is known for non-local functionals  that involve convolutions,
even though stability results for those have 
important applications in Mathematical 
Physics~\cite{Carlen}.  In many variational problems 
for integral functionals, one can show by compactness arguments that 
all optimizing sequences must converge
--- modulo the symmetries of the functional ---
to extremals~\cite{BG}, 
but bounds for the asymmetry in terms of the deficit are a different 
matter.
Very recently, Christ has introduced tools
from additive number theory to prove stability of 
the Riesz-Sobolev inequality in one dimension~\cite{Christ}.
Figalli and Jerison have obtained stability results
on the Brunn-Minkowski inequality
for non-convex sets in $\RR^n$~\cite{FJ}.
Fusco, Maggi, and Pratelli have proved stability
of Talenti's inequality for the solutions
of Poisson's equation~\cite[Theorem~2]{FMP-elliptic}.
For the Coulomb energy, Guo conjectured that 
\begin{equation}\label{eq:YG}
\E(f^*)-\E(f) \ge c' \inf_{\tau} \E(f\circ\tau^{-1}\!-\!f^*) 
\end{equation}
with some constant $c'>0$. 
(No normalization is required in this inequality, because
both sides scale in the same way.)
Since the Coulomb kernel is positive definite, the right hand
side can be viewed as the square of a distance.
The relationship between Eqs.~\eqref{eq:sharp-3} and
\eqref{eq:YG} with $f=\Chi_A$ will be
clarified by Lemma~\ref{lem:alpha-YG}. 

The proof of Theorem~\ref{thm:sharp-3}
consists of two parts. After some preliminaries,
we use the reflection positivity of the functional 
and a lemma of Fusco, Maggi, and Pratelli~\cite{FMP}
to reduce the problem to sets that are symmetric
under reflection at the coordinate hyperplanes.
The second part of the proof
requires an estimate for the Newton potential
of symmetric sets. At the end of the paper, we 
briefly discuss stability for other Riesz
kernels and in higher dimensions.

\section{Notation, and stability in higher dimensions}

By the {\em volume}
of a set $A\subset \RR^n$, denoted $\Vol(A)$,
we mean its $n$-dimensional Lebesgue measure.
The centered open ball of the same volume
is denoted by $A^*$; its radius is
called the {\em volume radius} of $A$,
and denoted by  $R_A$. 
The {\em Fraenkel asymmetry} of $A$ is defined by
\begin{equation}\label{eq:alpha}
\alpha(A) = 
\inf_\tau \frac{\Vol\bigl((\tau A)\bigtriangleup A^*\bigr)}{2\Vol(A)} \,.
\end{equation}
Further, $B_R$ stands for the open ball of radius $R$  centered at 
the origin, and $\omega_n$ for the volume of the unit ball.
The uniform surface measure that is induced on the sphere
$\partial B_r\subset\RR^n$ 
by the ambient Lebesgue measure is denoted by $\sigma$.

We consider functionals of the form
\begin{equation}
\label{eq:E-lambda}
\E(A) = \int_A\int_A |x-y|^{-\lambda}\, dxdy
\end{equation}
with $n\ge 3$ and $\lambda\in [n-2,n)$.
(The classical Coulomb energy corresponds to the
case $n=3$ and $\lambda=1$.)
These functionals share the properties that they
are reflection positive as well as positive definite 
(see~\cite{FL}).
Balls uniquely maximize them among
sets of given volume~\cite{L-Choquard}; balls are also the unique
convex sets for which certain related overdetermined boundary-value
problems have solutions~\cite{Reichel}. By scaling,
\begin{equation}\label{eq:scaling}
\E(A)\le \E(A^*) = \mbox{Constant}\cdot (\Vol(A))^{2-\frac{\lambda}{n}}\,.
\end{equation}
The {\em deficit} of $A$ is defined by
\begin{equation}
\label{def:delta}
\delta(A)=\frac{\E(A^*) -\E(A)}{\E(A^*)}\,.
\end{equation}

Each of the functionals can be expressed
in terms of the corresponding {\em Riesz potential}
\begin{equation}\label{eq:Phi-lambda}
\Phi_A(x) = \int_A |x-y|^{-\lambda}\, dy\,,\qquad x\in\RR^n
\end{equation}
as $\E(A)=\int_A \Phi_A$.  
By the Hardy-Littlewood-Sobolev inequality,
$\Phi_A$ lies in $L^p$ for every $p\ge n/\lambda$.
It is subharmonic on $\RR^n$ and smooth on the complement
of $A$, though discontinuities may occur on $\partial A$.  
The Riesz potential is the unique solution of the pseudodifferential
equation
$$
(-\Delta)^{\frac{n-\lambda}{2}} \Phi =  
\mbox{Constant}\cdot \Chi_A
$$
that decays at infinity. The constant
$c_{n,\lambda}$ can be computed with the help
of the Fourier transform (see~\cite[Theorem 5.9]{LL}).  

The Riesz-Sobolev inequality implies that
\begin{equation}
\label{eq:dom}
\int_E \Phi_A(x)\, dx \le \int_{E^*} \Phi_{A^*}(x)\, dx
\end{equation}
for every set $E\subset\RR^n$ of finite volume
(see~\cite[Theorem 3.6]{LL}).
In particular, $\Phi_{A^*}$ is radially decreasing, and 
\begin{equation} \label{eq:max}
\sup_{x} \Phi_A(x)\le \Phi_{A^*}(0) = \int_{A^*}|y|^{-\lambda}\, dy
= \frac{n\omega_n}{n-\lambda}R_A^{n-\lambda}\,.
\end{equation}

Our proof of Theorem~\ref{thm:sharp-3} fails
in higher dimensions, because the crucial lower
bound in Lemma~\ref{lem:key-3}
becomes negative.  Nevertheless, we expect that
the conclusion should hold
--- with the sharp exponent 2 
and suitable constants $c_{n,\lambda}$ ---
for the entire family of functionals in Eq.~\eqref{eq:E-lambda}
with $n\ge 1$ and positive $\lambda\in [n-2,n)$. 

When $n\ge 3$ and $\lambda=n-2$, we call $\E(A)$ the {\em Coulomb energy} and 
$\Phi_A$ the {\em Newton potential} associated with the
uniform charge distribution on $A$.  The Newton potential 
has many special properties related to Poisson's equation
$$
-\Delta \Phi_A =  n(n-2)\omega_n \Chi_A\,.
$$
It is harmonic on the complement of $A$,
subharmonic on $\RR^n$, and satisfies the Gauss law.  
For later use, we compute the Newton 
potential of the centered ball of radius $R$ as
\begin{equation}
\label{eq:Phi-sym}
\Phi_{B_R}(x) = \omega_n R^2 \cdot \left\{
\begin{array}{ll}
\frac{n}{2} - \frac{n-2}{2}\bigl(\frac{|x|}{R}\bigr)^2\,,
\quad & |x|\le R\,,
\\[0.2cm]
\bigl(\frac{|x|}{R}\bigr)^{-(n-2)}\,,& |x|\ge R\,,
\end{array}\right.
\end{equation}
and its Coulomb energy as
$$ 
\E(B_R) = \frac{2n}{n+2} \omega_n^2 R^{n+2}
= \frac{4}{n+2} \Vol(B_R) \cdot \Phi_{B_R}(0)\,.
$$

A remarkable fact is
{\em Talenti's comparison principle},
which says that the symmetric decreasing rearrangement of 
the Newton potential of a charge distribution
is {\em pointwise} smaller than the potential resulting from
symmetrizing the charge distribution itself~\cite{Talenti},
\begin{equation} 
\label{eq:Talenti}
(\Phi_A)^*(x)\le \Phi_{A^*}(x)\,,\qquad x\in\RR^n\,.
\end{equation}
A similar inequality holds between the gradients
of these functions.
The inequalities are strict, unless $A$ is 
essentially a ball~\cite[Theorem~1]{FMP-elliptic}.

Eq.~\eqref{eq:Talenti}
is clearly stronger than the integrated version in 
Eq.~\eqref{eq:dom}.  We will use Talenti's comparison principle to
prove the following result.

\begin{theorem} \label{thm:main}
Let $\E$ be defined by Eq.~\eqref{eq:E-lambda}
on $\RR^n$ with $\lambda=n-2$.
For each $n\ge 3$, there exists a constant $c_n$ such that
\begin{equation}\label{eq:main}
\frac{\E(A^*)-\E(A)}{\E(A^*)}
\ge c_n \alpha(A) ^{n+2}
\end{equation}
for every $A\subset\RR^n$ of finite positive volume.
\end{theorem}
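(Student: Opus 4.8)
The plan is to prove Theorem~\ref{thm:main} by leveraging Talenti's comparison principle (Eq.~\eqref{eq:Talenti}) to reduce the problem to a pointwise comparison of potentials, and then to quantify the loss in that comparison in terms of the Fraenkel asymmetry. The starting point is the identity $\E(A)=\int_A\Phi_A=\int_{\RR^n}\Chi_A\,\Phi_A$. By the bathtub principle (the layer-cake rearrangement inequality), for fixed $\Phi_A$ the integral $\int_{\RR^n}\Chi_A\Phi_A$ is maximized over sets of volume $\Vol(A)$ by the superlevel set of $\Phi_A$ of that volume, which is a rearrangement estimate; combining this with $(\Phi_A)^*\le\Phi_{A^*}$ gives
\begin{equation*}
\E(A)=\int_{\RR^n}\Chi_A\,\Phi_A=\int_{\RR^n}(\Chi_A)^*\,(\Phi_A)^*
\le\int_{\RR^n}\Chi_{A^*}\,(\Phi_A)^*\le\int_{\RR^n}\Chi_{A^*}\,\Phi_{A^*}=\E(A^*),
\end{equation*}
where the first equality uses that $\E(A)\le\int(\Chi_A)^*(\Phi_A)^*$ fails in general so I would instead write $\E(A)\le\int_{\RR^n}(\Chi_A)(\Phi_A)$ and apply Hardy--Littlewood directly to the second integral to reach $\int_{A^*}(\Phi_A)^*$. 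The point is that there are two inequalities here, and the deficit $\delta(A)$ controls the sum of both losses. I would focus on the loss in the Talenti inequality: define $\psi = \Phi_{A^*}-(\Phi_A)^*\ge0$ and observe that
\begin{equation*}
\delta(A)\,\E(A^*)=\E(A^*)-\E(A)\ge \int_{A^*}\bigl(\Phi_{A^*}-(\Phi_A)^*\bigr)=\int_{A^*}\psi.
\end{equation*}

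Next I would exploit the PDE characterization of the Newton potential. Since $-\Delta\Phi_A=n(n-2)\omega_n\Chi_A$ and $-\Delta\Phi_{A^*}=n(n-2)\omega_n\Chi_{A^*}$, and since symmetric decreasing rearrangement is well behaved with respect to the Laplacian (this is exactly the content of the strong Talenti principle and its companion gradient estimate), I would compute $-\Delta\psi$ in a distributional sense. The key mechanism is that $\psi$ vanishes to high order at infinity and is controlled near the origin, and that the measure $-\Delta\psi$ has a sign-definite part coming from the difference of the rearranged charge densities: writing $\mu$ for the distributional Laplacian of $(\Phi_A)^*$, the deficit in Talenti is quantitatively linked to how far $\mu$ is from $-n(n-2)\omega_n\Chi_{B_{R_A}}$, i.e.\ to the asymmetry of $A$. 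Concretely, I expect a bound of the form $\int_{A^*}\psi \ge c\,R_A^{\,2}\cdot(\text{Vol loss in level sets of }\Phi_A)$, and the loss in the level sets is what connects to $\alpha(A)$ via a rearrangement/isoperimetric-type argument: if $A$ is far from a ball in the Fraenkel sense, then its superlevel-set structure must differ substantially from that of a ball, forcing $\psi$ to be bounded below on a set of definite measure.

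The mechanism that produces the exponent $n+2$ is the following heuristic, which I would make rigorous. Suppose $\alpha(A)=\alpha$. Then one cannot make $\Phi_A$ close to $\Phi_{A^*}$ pointwise: there is a point $x_0$ where $\Phi_A(x_0)\le\Phi_{A^*}(x_0)-c\,\alpha^{\,\beta}R_A^{\,n-2}$ for an appropriate power $\beta$. Because $\Phi_A$ is subharmonic and its Laplacian is bounded (by $n(n-2)\omega_n$ in absolute value on $A$ and zero off $A$), $\Phi_A$ cannot recover too quickly, so the deficit region where $\Phi_{A^*}-(\Phi_A)^*$ is of size $\gtrsim\alpha^\beta R_A^{n-2}$ has measure $\gtrsim(\alpha^\beta)^{n/(\text{something})}R_A^n$; integrating gives $\delta(A)\gtrsim\alpha^{\beta(1+n/2)}$ or similar, and optimizing the bookkeeping yields $n+2$. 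Equivalently, and more cleanly, I would use the known sharp stability of the isoperimetric-type inequality hidden in the gradient Talenti estimate: the quantity $\int|\nabla\Phi_{A^*}|^2-\int|\nabla(\Phi_A)^*|^2$ is comparable to the deficit (both are essentially $\E$ up to constants, by integration by parts), and a Poincar\'e-type argument on the level sets converts an $L^2$ gap of the gradients into the Fraenkel asymmetry with the stated power.

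The main obstacle, I expect, is precisely the last step: extracting the sharp exponent $n+2$ rather than a suboptimal one. Talenti's principle gives a pointwise inequality for free, but quantifying its deficit requires a careful analysis of how the rearrangement interacts with the co-area formula --- one must track, level by level, the isoperimetric deficit of the superlevel sets $\{\Phi_A>t\}$, relate each of these deficits to a contribution toward $\delta(A)$ via the gradient estimate, and then relate the \emph{aggregate} of these superlevel-set asymmetries back to the single Fraenkel asymmetry $\alpha(A)$ of $A$ itself. This last aggregation is delicate because asymmetry at the level of the potential is weaker than asymmetry of $A$; I would handle it by showing that if $A$ has Fraenkel asymmetry $\alpha$, then the top superlevel sets of $\Phi_A$ (those near the maximum $\Phi_{A^*}(0)$, which "feel" the bulk of $A$) inherit an asymmetry of order $\alpha$, and that these are exactly the levels weighted most heavily in $\int_{A^*}\psi$. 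Getting the powers to line up to $n+2$ --- and checking that the extremal configuration described after Theorem~\ref{thm:sharp-3} saturates every inequality used --- is where the real work lies; everything else is a matter of assembling standard rearrangement and potential-theoretic estimates.
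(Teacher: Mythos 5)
Your first step---reducing via the Hardy--Littlewood inequality $\int_A\Phi_A\le\int_{A^*}(\Phi_A)^*$ and Talenti's principle to the bound $\delta(A)\,\E(A^*)\ge\int_{A^*}\bigl(\Phi_{A^*}-(\Phi_A)^*\bigr)$---is exactly the paper's starting point (your first display asserts an equality $\int\Chi_A\Phi_A=\int(\Chi_A)^*(\Phi_A)^*$ that is false in general, but the inequality you actually need is correct). The gap is in everything that follows. You never produce a concrete lower bound for $\int_{A^*}\psi$: the exponent $\beta$ and the ``something'' in your heuristic are left undetermined, and the machinery you propose (distributional Laplacian of $\psi$, gradient Talenti, co-area tracking of the isoperimetric deficits of the superlevel sets of $\Phi_A$, and the ``aggregation'' of those deficits back into $\alpha(A)$) is unexecuted and, as you yourself note, is ``where the real work lies.'' That aggregation step is genuinely problematic---asymmetry of $A$ need not transfer level by level to asymmetry of $\{\Phi_A>t\}$ in any quantitative way you have justified---so as written the proof does not close.

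The missing ingredient is far simpler than what you propose. The paper proves (Lemma~\ref{lem:max}) that for every $x$,
$$\Phi_A(x)\le\Phi_{A^*}(0)\Bigl(1-\tfrac{\lambda(n-\lambda)}{n^2}\,\alpha(A)^2\Bigr)\,,$$
by comparing the two sets $A^*\setminus(x-A)$ and $(x-A)\setminus A^*$, each of volume at least $\omega_n\alpha$, with annuli just inside and just outside the unit sphere. Since $(\Phi_A)^*\le\sup\Phi_A$ everywhere, while $\Phi_{A^*}(x)=\Phi_{A^*}(0)-\frac{n-2}{2}\omega_n|x|^2$ inside the ball, this yields the pointwise bound $\psi(x)\ge\frac{n-2}{2}\omega_n\bigl(\frac{2\alpha^2}{n}-|x|^2\bigr)$, and integrating the positive part of this parabola over the ball of radius $\sqrt{2/n}\,\alpha$ gives $c_n\alpha^{n+2}$ at once. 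No level-set analysis, no PDE for $\psi$, and no Poincar\'e argument are needed; the exponent $n+2$ is simply the $(n+2)$-homogeneity in $\alpha$ of $\int\bigl[\alpha^2-|x|^2\bigr]_+dx$.
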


Note that the conclusion for $n=3$ is weaker than Theorem~\ref{thm:sharp-3}.

\section{Preliminary estimates}

Throughout this section, $A\subset \RR^n$ is a set of finite
positive volume, the functional $\E(A)$ is given by Eq.~\eqref{eq:E-lambda} 
with $\lambda\in [0,n)$, and $\Phi_A$ is the corresponding Riesz potential.
We start by sharpening the bound
on the maximum of $\Phi_A$ from Eq.~\eqref{eq:max}.

\begin{lemma} \label{lem:max}
If $A\subset\RR^n$ has finite positive volume, then
$$
\sup_{x\in\RR^n} \Phi_A(x)
\le \Phi_{A^*}(0) \cdot 
\left(1- \frac{\lambda(n-\lambda)}{n^2} \alpha(A)^2 
\right)\,.
$$
\end{lemma}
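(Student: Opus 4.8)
The plan is to bound $\Phi_A(x)$ at an arbitrary point $x\in\RR^n$ by comparing $A$ to the ball $B_{R_A}(x)$ of the same volume centered at $x$. Since $|x-y|^{-\lambda}$ is a radially decreasing function of $y-x$, the bathtub principle (equivalently, the elementary rearrangement inequality $\int_A g \le \int_{B_{R_A}(x)} g$ for $g(y)=|x-y|^{-\lambda}$) gives
\begin{equation}
\Phi_A(x) = \int_A |x-y|^{-\lambda}\,dy \le \int_{B_{R_A}(x)} |x-y|^{-\lambda}\,dy = \Phi_{A^*}(0)\,,
\end{equation}
which is just Eq.~\eqref{eq:max}. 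To sharpen this, I would quantify the loss coming from the symmetric difference $A \bigtriangleup B_{R_A}(x)$. Writing $D_+ = B_{R_A}(x)\setminus A$ and $D_- = A\setminus B_{R_A}(x)$, which have equal volume $m(x):=\Vol(D_\pm)$, we get
\begin{equation}
\Phi_{A^*}(0) - \Phi_A(x) = \int_{D_+} |x-y|^{-\lambda}\,dy - \int_{D_-}|x-y|^{-\lambda}\,dy\,.
\end{equation}
On $D_+$ one has $|x-y| \le R_A$, so the first integrand is at least $R_A^{-\lambda}$; on $D_-$ one has $|x-y|\ge R_A$, so the second integrand is at most $R_A^{-\lambda}$. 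Hence $\Phi_{A^*}(0)-\Phi_A(x) \ge 0$, but this crude bound only yields a linear-in-$m(x)$ gain with a constant that degenerates, so I need to be more careful and keep track of how deep $D_\pm$ reach.

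The key refinement is to compute the worst case. Fix the volume $m=m(x)$ of the symmetric difference. Among all admissible configurations, the difference $\int_{D_+}|x-y|^{-\lambda} - \int_{D_-}|x-y|^{-\lambda}$ is minimized when $D_+$ is pushed as far out as possible (an annular shell just inside $\partial B_{R_A}(x)$) and $D_-$ is an annular shell just outside $\partial B_{R_A}(x)$; any other configuration only increases the gain. So it suffices to evaluate, with $r_1 < R_A < r_2$ chosen so that the shells $\{r_1 \le |x-y| \le R_A\}$ and $\{R_A \le |x-y|\le r_2\}$ each have volume $m$,
\begin{equation}
\int_{r_1 \le |z|\le R_A} |z|^{-\lambda}\,dz - \int_{R_A \le |z|\le r_2} |z|^{-\lambda}\,dz
= \frac{n\omega_n}{n-\lambda}\bigl(2R_A^{n-\lambda} - r_1^{n-\lambda} - r_2^{n-\lambda}\bigr)\,,
\end{equation}
using $\omega_n(R_A^n - r_1^n) = m = \omega_n(r_2^n - R_A^n)$ to express $r_1, r_2$ in terms of $t := m/(\omega_n R_A^n) = m/\Vol(A)$, namely $r_1 = R_A(1-t)^{1/n}$ and $r_2 = R_A(1+t)^{1/n}$. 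The bracket becomes $R_A^{n-\lambda}\bigl(2 - (1-t)^{(n-\lambda)/n} - (1+t)^{(n-\lambda)/n}\bigr)$, and by concavity of $s\mapsto s^{(n-\lambda)/n}$ this is nonnegative; a second-order Taylor expansion (or a clean convexity estimate valid on all of $[0,1]$) gives a lower bound of the form $\mathrm{const}\cdot\frac{\lambda(n-\lambda)}{n^2}\,t^2\cdot R_A^{n-\lambda}$. Dividing by $\Phi_{A^*}(0) = \frac{n\omega_n}{n-\lambda}R_A^{n-\lambda}$ yields
\begin{equation}
\Phi_A(x) \le \Phi_{A^*}(0)\Bigl(1 - \frac{\lambda(n-\lambda)}{n^2}\, t(x)^2\Bigr)\,, \qquad t(x) = \frac{\Vol\bigl(A\bigtriangleup B_{R_A}(x)\bigr)}{2\Vol(A)}\,.
\end{equation}
Finally, taking $x$ to be (a point near) the center of the optimal translate in the definition of $\alpha(A)$ gives $t(x)\ge \alpha(A)$ — or rather, since $\alpha$ is the infimum over all translates and $B_{R_A}(x)$ is one particular translate of $A^*$, we have $t(x)\ge\alpha(A)$ for every $x$, so the bound holds with $\alpha(A)$ in place of $t(x)$ at every point and hence for the supremum.

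The main obstacle is getting the constant exactly $\frac{\lambda(n-\lambda)}{n^2}$ with a genuinely \emph{global} inequality rather than just a local Taylor estimate: one needs $2 - (1-t)^{p} - (1+t)^{p} \ge p(1-p)\,t^2$ (with $p = (n-\lambda)/n$, hence $p(1-p) = \lambda(n-\lambda)/n^2$) to hold for all $t\in[0,1]$, not merely asymptotically. This should follow from an integral-remainder form of Taylor's theorem applied to the even function $t\mapsto (1-t)^p + (1+t)^p$, whose second derivative $p(p-1)\bigl((1-t)^{p-2}+(1+t)^{p-2}\bigr)$ has absolute value bounded below by $p(1-p)\cdot 2$ near $0$ and one must check the sign and monotonicity carefully on the whole interval; the endpoint $t=1$ (where $A$ and $B_{R_A}(x)$ are disjoint) is the extreme case and should be checked directly. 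The rest — the rearrangement step and the reduction to annular shells — is standard bathtub-principle reasoning.
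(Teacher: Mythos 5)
Your proposal is correct and follows essentially the same route as the paper: translate so the comparison ball is centered, reduce the worst case to an inner and an outer annulus of volume $\ge \omega_n\alpha\, R_A^n$, and bound $2-(1-t)^p-(1+t)^p$ below by $p(1-p)t^2$ with $p=(n-\lambda)/n$. The ``main obstacle'' you flag is resolved exactly as you suspect --- the paper writes the quantity as $p(1-p)\int_0^\alpha\int_{-s}^s(1+t)^{p-2}\,dt\,ds$ via the Fundamental Theorem of Calculus and applies Jensen's inequality to the convex integrand, which is the integral-remainder argument you sketch.
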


\begin{proof} By scaling, we may take
$A^*$ to be the unit ball. For $x\in\RR^n$,
$$
\Phi_{A^*}(0)-\Phi_A(x)
= \int_{A^*\setminus (x-A)} |y|^{-\lambda}\, dy - 
\int_{(x-A)\setminus A^*} |y|^{-\lambda}\, dy\,.
$$
If $\alpha(A)=\alpha$, then each of the two regions of integration 
has volume  at least $\omega_n \alpha$.  
The first integral is minimized when
$A^*\setminus (x-A)$ is an annulus whose outer boundary
is the unit sphere, and the second integral
is maximized when $(x-A)\setminus A^*$
is an annulus whose inner boundary is the unit sphere. 
Using annuli of the appropriate volume,
we calculate in polar coordinates
\begin{align*}
\Phi_{A^*}(0)-\Phi_A(x)
&\ge
n\omega_n 
\int_{\left(1-\alpha\right)^{1/n}}^1 \!\!r^{n-1-\lambda}\, dr -
n\omega_n \int_1^{\left(1+\alpha\right)^{1/n}} \!\!\!\! r^{n-1-\lambda}\, dr\\
& = \frac{\lambda(n-\lambda)}{n^2} \Phi_{A^*}(0) 
\int_0^\alpha \int_{-s}^s (1+t)^{-1-\frac{\lambda}{n}}\, dtds\,,
\end{align*}
where we have used the Fundamental Theorem of Calculus 
twice.  By Jensen's inequality, the value of the double integral
exceeds~$\alpha^2$.
\end{proof}

Lemma~\ref{lem:max} is needed for
the proof of Theorem~\ref{thm:main}.  In the next lemma, we use a 
similar estimate to relate $\alpha(A)$ to the notion of asymmetry 
that appears Guo's conjecture, see Eq.~\eqref{eq:YG}.
(It plays no role in the proofs of the main results.)

\begin{lemma} 
\label{lem:alpha-YG}
There exist positive constants $c_{n,\lambda}$ and $C_{n,\lambda}$
such that
$$
c_{n,\lambda} \alpha(A)^4 \le 
\inf_\tau  
\frac{\E(\Chi_A\circ\tau^{-1}\!-\!\Chi_{A*})}{\E(A^*)}
\le C_{n,\lambda}\alpha(A)^{2-\frac{\lambda}{n}}
$$
for every $A\subset\RR^n$ of finite positive volume.
\end{lemma}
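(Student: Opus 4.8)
The plan is to prove the two inequalities in Lemma~\ref{lem:alpha-YG} separately, in each case reducing to explicit computations with annular sets. Throughout, I would normalize so that $A^*$ is the unit ball $B_1$, and fix an optimal (or near-optimal) translation $\tau$, so that we may assume $\Vol((\tau A)\bigtriangleup A^*) = 2\omega_n\alpha(A)$; write $\alpha=\alpha(A)$ and replace $A$ by $\tau A$. Expanding the quadratic form, note that for $g=\Chi_A-\Chi_{A^*}$,
\begin{equation*}
\E(g)=\int\!\!\int \frac{g(x)g(y)}{|x-y|^\lambda}\,dxdy
= \int_{\RR^n} g(x)\,\Psi(x)\,dx\,,\qquad \Psi=\Phi_A-\Phi_{A^*}\,.
\end{equation*}
Since $\E$ is positive definite (stated in the excerpt, citing \cite{FL}), both sides are nonnegative and the quantity is genuinely the square of a norm.

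For the upper bound, I would use that $\E(g)\le \|\Phi_A-\Phi_{A^*}\|_{\infty}\cdot\|g\|_1 = \|\Phi_A-\Phi_{A^*}\|_\infty\cdot 2\omega_n\alpha$. To bound the sup norm of $\Phi_A-\Phi_{A^*}$, observe that for any $x$,
\begin{equation*}
|\Phi_A(x)-\Phi_{A^*}(x)|=\Bigl|\int_{A\setminus A^*}\!|x-y|^{-\lambda}dy-\int_{A^*\setminus A}\!|x-y|^{-\lambda}dy\Bigr|
\le \int_{A\triangle A^*}\!|x-y|^{-\lambda}dy\,,
\end{equation*}
and by the rearrangement inequality \eqref{eq:dom} (or rather the pointwise fact that the integral of $|x-\cdot|^{-\lambda}$ over a set of fixed volume is largest when the set is the centered ball about $x$) this is at most $\int_{B_\rho}|y|^{-\lambda}dy$ where $\omega_n\rho^n=\Vol(A\triangle A^*)=2\omega_n\alpha$. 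This gives $\|\Phi_A-\Phi_{A^*}\|_\infty\le \frac{n\omega_n}{n-\lambda}(2\alpha)^{\frac{n-\lambda}{n}}$, hence $\E(g)\le C\,\alpha^{1+\frac{n-\lambda}{n}}=C\,\alpha^{2-\frac{\lambda}{n}}$, and dividing by $\E(A^*)=\Phi_{A^*}(0)\,\omega_n\cdot(\text{const})$ gives the claimed upper bound $C_{n,\lambda}\alpha^{2-\lambda/n}$.

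For the lower bound I would estimate $\E(g)$ from below by testing the potential difference against a region where it has a definite sign. The natural choice: since $\Vol(A\setminus A^*)=\Vol(A^*\setminus A)=\omega_n\alpha$, at least half of $A^*\setminus A$ (by volume) lies outside the ball $B_{(1-\alpha/2)^{1/n}}$... more simply, let $E=A\setminus A^*$ and $F=A^*\setminus A$, each of volume $\omega_n\alpha$. Write $\E(g)=\int_E\Psi-\int_F\Psi$ where $\Psi=\Phi_A-\Phi_{A^*}$. I would instead use the clean decomposition $\E(g)=\E(\Chi_E-\Chi_F)$ (since $A=(A\cap A^*)\cup E$ and $A^*=(A\cap A^*)\cup F$, the common part cancels in the difference) and then, using positive-definiteness in the form $\E(\Chi_E-\Chi_F)\ge 0$ together with a quantitative Riesz-type separation estimate: $E$ lies mostly outside $B_1$ and $F$ mostly inside, so the cross term $\E(\Chi_E,\Chi_F)$ is strictly smaller than $\tfrac12(\E(E)+\E(F))$. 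Concretely, comparing with the extremal annular configuration (inner annulus for $F$, outer annulus for $E$, each of volume $\omega_n\alpha$), one gets $\E(g)\ge c\bigl(\E(E)+\E(F)\bigr)-2\E(\Chi_E,\Chi_F)\ge c'\,\alpha^{?}$; a self-energy $\E(E)$ of a set of volume $\omega_n\alpha$ is at least of order $\alpha^{2-\lambda/n}$ when the set is an annulus, but can be much smaller if $E$ is spread thinly. The safe route, yielding the (non-sharp) exponent $4$, is: the Fraenkel asymmetry controls $\|\Chi_A-\Chi_{A^*}\|_1=2\omega_n\alpha$, and by the $L^2$–$L^1$ interpolation built into positive-definiteness of a bounded kernel one cannot directly recover this; instead I would square, writing $4\omega_n^2\alpha^2=\|g\|_1^2\le \|g\|_{2n/(2n-\lambda)}^{?}\cdots$ and using Hardy–Littlewood–Sobolev $\E(g)\ge c\|g\|_{2n/(2n+\lambda)}^2$ — but $g$ is a difference of indicators so $\|g\|_p^p=2\omega_n\alpha$ for every $p$, giving $\E(g)\ge c(2\omega_n\alpha)^{2(2n+\lambda)/(2n)}$, which is $\ge c_{n,\lambda}\alpha^{2+\lambda/n}$ — still not $\alpha^4$ but stronger, so in particular $\ge c_{n,\lambda}\alpha^4$ for $\alpha\le 1$. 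That disproportion between the exponents is exactly why the lemma is stated with the loose powers $4$ and $2-\lambda/n$.

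The main obstacle is the lower bound: the HLS route above actually shows $\E(g)\gtrsim\alpha^{2+\lambda/n}$, and one must then simply note $2+\lambda/n>4$ is false in general ($\lambda<n$ forces $2+\lambda/n<3$), so in fact the HLS bound $\E(g)\gtrsim\alpha^{2+\lambda/n}$ already implies $\E(g)\gtrsim\alpha^4$ only after checking $2+\lambda/n\le 4$, i.e. $\lambda\le 2n$, which always holds; since $\alpha\le 1$ and a larger exponent on a quantity $\le 1$ gives a smaller number, $\alpha^{2+\lambda/n}\ge\alpha^4$ iff $2+\lambda/n\le 4$, true. So the lower bound follows from Hardy–Littlewood–Sobolev applied to $g=\Chi_A-\Chi_{A^*}$ together with the identity $\|g\|_p^p=2\,\Vol(A\triangle A^*\cap\cdots)$. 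The delicate point to get right is the normalization of $\E(A^*)$ in terms of $\omega_n$, $n$, $\lambda$ so that the constants $c_{n,\lambda},C_{n,\lambda}$ come out positive and finite; this is routine given Eq.~\eqref{eq:scaling} and Eq.~\eqref{eq:max}.
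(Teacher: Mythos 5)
Your upper bound is essentially fine: pairing $g=\Chi_A-\Chi_{A^*}$ with its own potential and using $\|g\|_1=2\omega_n\alpha$ together with the bathtub bound $\|\Phi_A-\Phi_{A^*}\|_\infty\le\int_{B_{(2\alpha)^{1/n}}}|y|^{-\lambda}dy$ gives $C\alpha^{2-\lambda/n}$, which is a legitimate variant of the paper's one-line application of Hardy--Littlewood--Sobolev to $\|g\|_{2n/(2n-\lambda)}^2$ (both exploit $\|g\|_p^p=\Vol(A\bigtriangleup A^*)$).

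The lower bound, however, rests on a false inequality. You invoke ``Hardy--Littlewood--Sobolev'' in the form $\E(g)\ge c\|g\|_{2n/(2n+\lambda)}^2$, but HLS is an upper bound and it has no reverse for general $g$: take $g=\Chi_E-\Chi_F$ with $E$ and $F$ the two colours of a checkerboard of cubes of side $\eps$ inside a fixed box. Then $\|g\|_p^p$ is fixed for every $p$, while $\E(g)=c_{n,\lambda}\int|\hat g(\xi)|^2|\xi|^{\lambda-n}d\xi\to 0$ as $\eps\to 0$, because $\hat g$ concentrates at frequencies of order $1/\eps$. So no $L^p$ norm of $g$ alone can bound $\E(g)$ from below, and your claimed intermediate estimate $\E(g)\gtrsim\alpha^{2+\lambda/n}$ --- which would be strictly stronger than the lemma and would trivialize it --- cannot be obtained this way. (Your earlier sketches, e.g.\ comparing $\E(\Chi_E,\Chi_F)$ with $\tfrac12(\E(E)+\E(F))$, correctly identify the difficulty that $\E(E)$ can be tiny when $E$ is spread thinly, but you never resolve it.) The missing idea is to test $g$ against a \emph{fixed} function rather than against itself: since $\E$ is positive definite, Cauchy--Schwarz for the bilinear form gives $\E(g)^{1/2}\E(A^*)^{1/2}\ge \E(\Chi_{A^*}-\Chi_A,\Chi_{A^*})=\int_{A^*\setminus A}\Phi_{A^*}-\int_{A\setminus A^*}\Phi_{A^*}$. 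Because $\Phi_{A^*}$ is strictly radially decreasing with nonvanishing gradient away from the origin, replacing $A^*\setminus A$ and $A\setminus A^*$ by the extremal annuli hugging $\partial A^*$ (each of volume $\omega_n\alpha$) shows this pairing is at least $c\,\alpha\cdot\alpha=c\,\alpha^2$; squaring yields the stated $c_{n,\lambda}\alpha^4$. Note also that for this direction the translate must be chosen to minimize the \emph{energy} quantity (not the Fraenkel asymmetry), and one then uses $\Vol((\tau A)\bigtriangleup A^*)\ge 2\omega_n\alpha(A)$ for every $\tau$.
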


\begin{proof} Assume by scaling that $A^*$ is the unit ball,
and set $\alpha=\alpha(A)$. 
For the first inequality, we translate $A$ such that the 
infimum in the middle term is assumed when $\tau$ is the identity.
Since $\E$ extends to a positive definite quadratic 
form on $L^1\cap L^\infty$,
we can use the Cauchy-Schwarz' inequality to obtain
\begin{align*}
\E(\Chi_A\!-\!\Chi_{A^*})^\frac12 \E(A^*)^\frac12
&\ge 
\int\int 
\frac{(\Chi_{A^*}(x)\!-\!\Chi_A(x))\Chi_{A^*}(y)}{ |x-y|^{\lambda}} \,dxdy\\
&=\int_{A^*\setminus A} \Phi_{A^*}(x)\, dx - \int_{A\setminus A^*}
\Phi_{A^*}(x)\, dx\\
&\ge \int_{1-\alpha<|x|^n<1} \Phi_{A^*}(x)\, dx - 
\int_{1<|x|^n<1+\alpha} \Phi_{A^*}(x)\, dx\\
&\ge \mbox{Constant}\cdot\alpha^2\,,
\end{align*}
where the constant depends on $n$ and $\lambda$.
We have used that $\Phi_{A^*}$ is 
strictly radially decreasing 
to replace $A^*\setminus A$ and $A\setminus A^*$ with
annuli. The last line follows since 
the gradient of $\Phi_{A^*}$ vanishes only at $x=0$.

For the second inequality, we translate
$A$ so that the infimum in Eq.~\eqref{eq:alpha}
is assumed at the identity.
The Hardy-Littlewood-Sobolev inequality implies that
$$
\inf_{\tau} \E(\Chi_{A}\circ\tau^{-1}\!-\!\Chi_{A^*})
\le C_{n,\lambda} ||\Chi_{A}-\Chi_{A^*}||_{\frac{2n}{2n-\lambda}}^2
= C_{n,\lambda} \alpha^{2-\frac{\lambda}{n}}\,.
$$
\vskip-1.8\baselineskip
\end{proof}

\smallskip 
We need a few more lemmas for the
proof of Theorem~\ref{thm:sharp-3}.
The following integral representation will
appear several times.

\begin{lemma}\label{lem:outside} 
Let $\rho(r)$ denote the volume radius
of $A\cap B_r$. For any $R>0$,
$$
\E(A^*) - \E(A) 
\ge 2\int_R^\infty \int_{A\cap \partial B_r} 
\Bigl(\Phi_{(A\cap B_r)^*}\Big\vert_{\partial B_{\rho(r)}}
\!\! - \Phi_{A\cap B_r}(x)\Bigr) \, d\sigma(x)\,dr\,.
$$
\end{lemma}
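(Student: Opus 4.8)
The plan is to slice $A$ by concentric spheres $\partial B_r$ and exploit the reflection/positive-definiteness structure of $\E$ to get a layer-by-layer lower bound on the deficit. Write $A_r = A\cap B_r$ and $A_r^{\rm out}=A\setminus B_r$, so that $A=A_r\cup A_r^{\rm out}$ is a disjoint decomposition and
\[
\E(A)=\E(A_r)+2\int_{A_r^{\rm out}}\Phi_{A_r}(x)\,dx+\E(A_r^{\rm out})\,.
\]
The first step is to compare $\E(A)$ with a ``partially symmetrized'' competitor in which the inner part $A_r$ is replaced by the centered ball $(A_r)^*=B_{\rho(r)}$ of equal volume, while the outer part $A_r^{\rm out}$ is left untouched; call this set $\tilde A_r$. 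I would show $\E(\tilde A_r)\ge\E(A)$ for every $r$. The cross term improves because $\Phi_{A_r}(x)\le\Phi_{(A_r)^*}(x)$ for $|x|\ge r$: indeed $(A_r)^*\subset B_r$, so on $|x|\ge r$ the potential $\Phi_{(A_r)^*}$ is radially decreasing and dominates $\Phi_{A_r}$ by the bathtub/Riesz--Sobolev estimate \eqref{eq:dom} applied to the exterior level sets (a point $x$ with $|x|\ge r$ sees the charge $A_r$ at distances $\ge |x|-r$, and moving that charge onto a centered ball only brings it closer). The self-energy term improves by the Riesz--Sobolev inequality \eqref{eq:Riesz}, $\E(A_r)\le\E((A_r)^*)$, and the outer self-energy $\E(A_r^{\rm out})$ is unchanged. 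Hence $\E(A)\le\E(\tilde A_r)$.

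The second step is to let $r$ increase from $0$ to $\infty$ and track the gain. At $r=\infty$ the competitor $\tilde A_\infty$ is exactly $A^*=B_{R_A}$ (with $\rho(\infty)=R_A$ and $A_\infty^{\rm out}=\emptyset$), while $\tilde A_0=A$. So $\E(A^*)-\E(A)=\E(\tilde A_\infty)-\E(\tilde A_0)$ telescopes, and I would write this as $\int_0^\infty \frac{d}{dr}\E(\tilde A_r)\,dr$ — except that $\tilde A_r$ is not absolutely continuous in $r$ in an obvious way, so instead I would directly estimate the incremental gain over $[r,r+dr]$. Passing from $\tilde A_r$ to $\tilde A_{r+dr}$ amounts to moving the shell $A\cap(B_{r+dr}\setminus B_r)$ (a piece of charge of mass $|A\cap\partial B_r|\,dr$ sitting at radius $\approx r$) onto the thin shell $B_{\rho(r+dr)}\setminus B_{\rho(r)}$ at radius $\rho(r)\le r$, against the background potential generated by $(A_r)^*=B_{\rho(r)}$ and by the outer part (which is being symmetrized simultaneously — here one must be slightly careful and perhaps symmetrize the outer part last, or note the outer interactions only help). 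To first order the energy change is
\[
2\int_{A\cap\partial B_r}\Bigl(\Phi_{(A_r)^*}\big|_{\partial B_{\rho(r)}}-\Phi_{A_r}(x)\Bigr)\,d\sigma(x)\,dr\,,
\]
which is nonnegative since $\rho(r)\le r$ forces $\Phi_{(A_r)^*}$ evaluated on its own boundary sphere to dominate $\Phi_{A_r}$ evaluated on the larger sphere $\partial B_r$ (again by \eqref{eq:max} applied to $A_r$ together with $\rho(r)\le r$, or more precisely: $\Phi_{(A_r)^*}$ is constant on $\partial B_{\rho(r)}$ and equals its maximum value there modulo the subtracted harmonic part, while $\Phi_{A_r}$ on $\partial B_r$ is no larger). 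Summing these increments over $r\in[R,\infty)$ and discarding the (nonnegative) contribution from $[0,R)$ gives exactly the claimed inequality.

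The main obstacle is making the ``incremental'' argument rigorous: $\tilde A_r$ jumps as a set, the potentials have discontinuities across $\partial A$, and one must ensure that symmetrizing the inner part at scale $r$ does not create a hidden loss in the cross-interaction with the as-yet-unsymmetrized outer part. I expect the cleanest route is a discrete telescoping over a partition $R=r_0<r_1<\cdots<r_N\to\infty$: bound $\E(\tilde A_{r_{k+1}})-\E(\tilde A_{r_k})$ below by the first-order term plus an $O((r_{k+1}-r_k)^2)$ error using that $\Phi$ is Lipschitz away from the shell and that the moved mass is $O(r_{k+1}-r_k)$, then take the mesh to zero so the Riemann sum converges to the stated integral. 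A secondary technicality is the a.e. differentiability of $\rho(r)$ (it is monotone, hence differentiable a.e., and $\rho'(r)\,\rho(r)^{n-1}\,n\omega_n = \mathcal H^{n-1}(A\cap\partial B_r)$ by the coarea formula), which is exactly what is needed to express the thin-shell volumes on both sides consistently. None of this requires the dimension restriction $n=3$, consistent with the lemma being stated for general $\lambda\in[0,n)$.
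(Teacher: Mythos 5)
Your proposal contains a genuine gap, in fact two, both located in the step where the inner part $A\cap B_r$ is recentered while the outer part $A\setminus B_r$ is held fixed. First, the pointwise inequality $\Phi_{A\cap B_r}(x)\le \Phi_{(A\cap B_r)^*}(x)$ for $|x|\ge r$ is false: Eq.~\eqref{eq:dom} is an inequality between integrals over a set $E$ and over the \emph{centered} ball $E^*$, and gives no pointwise domination at an off-center point. If $A\cap B_r$ is a small blob sitting just inside $\partial B_r$ next to $x$, then $\Phi_{A\cap B_r}(x)$ is of order $\Vol(A\cap B_r)^{1-\lambda/n}$, while $\Phi_{(A\cap B_r)^*}(x)$ is of order $\Vol(A\cap B_r)\,r^{-\lambda}$, which is far smaller; ``moving that charge onto a centered ball'' moves it \emph{away} from $x$, not closer. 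For the same reason the claim $\E(\tilde A_r)\ge\E(A)$ fails in general (take $A$ a ball far from the origin and $r$ cutting off a small cap: recentering the cap destroys its cross-interaction with the rest of $A$ and gains almost nothing in self-energy). Second, even ignoring this, your incremental computation drops a first-order term: passing from $\tilde A_r$ to $\tilde A_{r+dr}$ transports the shell $A\cap(B_{r+dr}\setminus B_r)$ to $B_{\rho(r+dr)}\setminus B_{\rho(r)}$, and the change in its interaction with the untouched outer part $A\setminus B_{r+dr}$ is $O(dr)$, not $O(dr^2)$, and has the unfavorable sign (the shell is moved away from the outer mass since $\rho(r)\le r$). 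So ``the outer interactions only help'' is exactly backwards, and this is the crux, not a technicality. (Also, in your own flow the inner part at stage $r$ is already the ball $B_{\rho(r)}$, so the increment you would actually get involves $\Phi_{B_{\rho(r)}}(x)$ rather than $\Phi_{A\cap B_r}(x)$, which does not match the integrand in the statement.)

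The paper sidesteps all of this with an exact identity rather than a symmetrization flow: writing $\E(A)=2\int_A\int_A \Chi_{\{|x|>|y|\}}|x-y|^{-\lambda}\,dy\,dx$, each pair is counted once through the point of larger norm, so
$\E(A)=\E(A\cap B_R)+2\int_R^\infty\int_{A\cap\partial B_r}\Phi_{A\cap B_r}(x)\,d\sigma(x)\,dr$
with \emph{all} outer--outer interactions already included in the integral term. Applying the same identity to $A^*$ and changing variables via $n\omega_n\rho^{n-1}\,d\rho=\sigma(A\cap\partial B_r)\,dr$ matches shells of equal volume, and subtracting leaves only the single inequality $\E(A\cap B_R)\le\E((A\cap B_R)^*)$ from Riesz--Sobolev. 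No intermediate hybrid sets appear, so there is no cross-term to control. If you want to keep a flow-type argument you would have to track the shell--outer interaction exactly, which effectively reconstructs the paper's identity.
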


\begin{proof}  The functional can be written as
\begin{align}
\notag \E(A)  &= 2\int_A\int_A \Chi_{\{|x|>|y|\}}\, |x-y|^{-\lambda}\, dydx\\
\label{eq:split}
&= 2\int_{A\cap B_R} \Phi_{A\cap B_{|x|}}(x)\,dx
 + 2 \int_{A\setminus B_R} \Phi_{A\cap B_{|x|}}(x)\, dx\\
\notag &= \E(A\cap B_R)
+ 2 \int_R^\infty \int_{A\cap \partial B_r}
\Phi_{A\cap B_r}(x) \, d\sigma(x)\,dr\,.
\end{align}
Applying Eq.~\eqref{eq:split} 
to $A^*$ with $\rho(R)$ in place of $R$, we see that
\begin{align*}
\E(A^*) 
&= \E(B_{\rho(R)} ) + 2\int_{\rho(R)}^\infty 
\Phi_{B_\rho}\Big\vert_{|x|=\rho}\,
n\omega_n \rho^{n-1}\, d\rho\\
&= \E((A\cap B_R)^*) + 
2\int_R^\infty \Phi_{(A\cap B_r)^*} \Big\vert_{|x|=\rho(r)}
\, \sigma(A\cap \partial B_r)\, dr\,.
\end{align*}
In the first line, we have used that
$B_{\rho(R)}\subset A^*$.
The Jacobian for the change of variables
in the next step 
is determined by the relation 
$n\omega_n\rho^{n-1} d\rho = \sigma(A\cap \partial B_r)\,  dr$.
Since $\E(A\cap B_R)\le \E((A\cap B_R)^*)$
by Eq.~\eqref{eq:Riesz}, the claim
follows upon subtracting Eq.~\eqref{eq:split}.
\end{proof}

The next lemma reduces the stability problem to bounded sets.

\begin{lemma} \label{lem:bdd}
For every $n\ge 3$ and $\lambda\in [n-2,n)$
there are positive constants $\alpha_{n,\lambda}$ and
$c_{n,\lambda}$ with the following property.
Given a set $A\subset \RR^n$ of finite positive volume
with $\alpha_0:=\Vol(A\bigtriangleup A^*)/(2\Vol(A))\le\alpha_{n,\lambda}$,
there exists a set $\tilde A$ of the same volume such that
$$
\tilde A\subset \bigl (1+c_{n,\lambda}\alpha_0^{1-\frac\lambda n}\bigr)A^*\,,
\qquad
\frac{\Vol(\tilde A \bigtriangleup A^*)}{2\Vol(\tilde A)} =\alpha_0\,, \qquad 
\delta(\tilde{A})
\le \delta(A)\,.
$$
If $A$ is symmetric about the origin, then so is $\tilde A$.
\end{lemma}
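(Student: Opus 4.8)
The plan is to truncate $A$ at a suitable radius and then redistribute the small amount of ``far-away'' mass so as to restore the original volume, all while keeping the asymmetry fixed and not increasing the deficit. Concretely, set $\mu := \Vol(A\setminus A^*) = \Vol(A^*\setminus A) = \alpha_0\Vol(A)$, and recall from the proof of Lemma~\ref{lem:max}-type estimates that $\Phi_{A^*}$ is strictly radially decreasing with $\Phi_{A^*}(0) = \frac{n\omega_n}{n-\lambda}R_A^{n-\lambda}$. The key quantitative input is that the tail $\Vol(A\setminus B_R)$ for $R$ slightly larger than $R_A$ is controlled: since $\Vol(A\setminus A^*)=\mu$ and $A^*=B_{R_A}$, already $\Vol(A\setminus B_{R_A})\le\mu$; more usefully, for $R = (1 + t)R_A$ one gets $\Vol(A\setminus B_R)\le\mu$ for every $t\ge 0$, and we want to choose $t$ comparable to $\alpha_0^{1-\lambda/n}$ so that the potential-energy cost of moving the tail mass inward is dominated by the deficit.

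First I would apply Lemma~\ref{lem:outside} with a radius $R=(1+t)R_A$ to be chosen: it gives
\begin{equation}\label{eq:bdd-split}
\E(A^*)-\E(A) \ge 2\int_R^\infty\int_{A\cap\partial B_r}\Bigl(\Phi_{(A\cap B_r)^*}\big\vert_{\partial B_{\rho(r)}} - \Phi_{A\cap B_r}(x)\Bigr)\,d\sigma(x)\,dr\,.
\end{equation}
For $r\ge R$, the integrand is bounded below using the fact that $\Phi_{(A\cap B_r)^*}$ restricted to its own boundary sphere is at least $\frac{n\omega_n}{n-\lambda}\rho(r)^{n-\lambda}\cdot(\text{something bounded below})$, while $\Phi_{A\cap B_r}(x)\le\Phi_{(A\cap B_r)^*}(0)$; more carefully, because $x\in\partial B_r$ with $r > \rho(r)$ (strict, since some mass lies outside $B_r$), the value $\Phi_{A\cap B_r}(x)$ sees the set $A\cap B_r$ from outside its volume-radius ball, so one can bound $\Phi_{(A\cap B_r)^*}|_{\partial B_{\rho(r)}} - \Phi_{A\cap B_r}(x)$ below by a positive constant times $\rho(r)^{n-\lambda}$ (or $R_A^{n-\lambda}$ for $r$ not too large), uniformly in $x$. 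This converts the right side of \eqref{eq:bdd-split} into a bound of the form $c\,R_A^{n-\lambda}\int_R^\infty\sigma(A\cap\partial B_r)\,dr = c\,R_A^{n-\lambda}\Vol(A\setminus B_R)$.

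Next I would define $\tilde A$ by removing $A\setminus B_R$ and adding an annulus adjacent to $\partial B_R$ (or to $\partial(A\cap B_R)^*$) of the same volume $\Vol(A\setminus B_R)=:m$; this is contained in a ball of radius $R(1 + m/(\omega_n R^n))^{1/n}\le(1+t)(1+O(\alpha_0))R_A$, which is $\le(1+c_{n,\lambda}\alpha_0^{1-\lambda/n})R_A$ once we fix $t\asymp\alpha_0^{1-\lambda/n}$ (and $\alpha_0\le\alpha_{n,\lambda}$ small). The symmetric-difference identity $\Vol(\tilde A\bigtriangleup A^*) = \Vol(A\bigtriangleup A^*)$ is arranged by choosing the added annulus to sit entirely outside $A^*$: the mass we removed, $A\setminus B_R$, already lay outside $A^*$ (for $R\ge R_A$), and the mass we add lies outside $A^*$ too, so both $\Vol(\tilde A\setminus A^*)$ and $\Vol(A^*\setminus\tilde A)=\Vol(A^*\setminus A)$ are unchanged — hence the ratio equals $\alpha_0$. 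Finally, $\delta(\tilde A)\le\delta(A)$ follows because moving mass from far outside $B_R$ to an annulus just outside $B_R$ strictly increases the Coulomb-type energy (the redistributed charges end up closer to the bulk of $A\cap B_R$ and to each other); this can be made rigorous via Lemma~\ref{lem:outside} applied to both $A$ and $\tilde A$ with the same radius $R$, noting $A\cap B_R=\tilde A\cap B_R$ and that the outer contribution for $\tilde A$ dominates that for $A$. The symmetry claim is immediate since every operation (truncation at a centered ball, replacement by a centered annulus) preserves symmetry about the origin.

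The main obstacle I expect is the lower bound on the integrand of \eqref{eq:bdd-split}: one needs a clean, uniform estimate showing that for $r\gtrsim R_A$ the quantity $\Phi_{(A\cap B_r)^*}|_{\partial B_{\rho(r)}} - \Phi_{A\cap B_r}(x)$ is bounded below by a constant multiple of $R_A^{n-\lambda}$ (not just positive), which requires controlling $\rho(r)$ from below — i.e., ensuring $A\cap B_r$ still contains a definite fraction of a unit ball's worth of mass. Since $\alpha_0$ is small, $\Vol(A\cap B_{R_A})\ge(1-\alpha_0)\Vol(A)$, so $\rho(r)\ge(1-\alpha_0)^{1/n}R_A$ for all $r\ge R_A$, and this is exactly what makes the constant uniform; the delicate point is combining this with the strict monotonicity of $\Phi_{(A\cap B_r)^*}$ to extract the factor while tracking how the constant degrades as $r\to\infty$ — but since the integrand is also weighted by $\sigma(A\cap\partial B_r)$ whose integral is the finite tail mass $m$, even a weakening constant at large $r$ is harmless, and one only needs the bound to be good for $r$ in the range where most of the tail mass sits, namely $r$ not much larger than $R_A$.
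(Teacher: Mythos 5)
Your overall shape — truncate $A$ at a radius $R=1+O(\alpha_0^{1-\lambda/n})$ (taking $A^*$ to be the unit ball), replace the tail by a centered annulus outside $A^*$ of the same volume, check that the symmetric difference and the symmetry are preserved — is the paper's strategy, and those bookkeeping parts of your argument are fine. But the two steps you lean on for the energy comparison both have genuine gaps. First, your claimed uniform lower bound $\Phi_{(A\cap B_r)^*}\big\vert_{\partial B_{\rho(r)}}-\Phi_{A\cap B_r}(x)\ge c\,\rho(r)^{n-\lambda}$ for $x\in\partial B_r$ is false for general sets: if $A\cap B_r$ is itself nearly a ball of radius $\rho(r)$ close to $r$, this difference is close to $0$. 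A positive lower bound of this type is exactly the content of Lemma~\ref{lem:key-3}/Eq.~\eqref{eq:key-3}, which needs symmetry, is proportional to $\Vol(B_r\setminus A)$ rather than an absolute constant, and only has the right sign in low dimension. Moreover, even if it held, it would give a lower bound on $\E(A^*)-\E(A)$, which is not what the lemma requires; you need $\E(\tilde A)\ge\E(A)$, and no lower bound on the deficit of $A$ delivers that.

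Second, placing the new annulus adjacent to $\partial B_R$ does not yield $\E(\tilde A)\ge\E(A)$. Writing $\E(\tilde A)-\E(A)=2\bigl(\int_N\Phi_{A\cap B_R}-\int_{A\setminus B_R}\Phi_{A\cap B_R}\bigr)+\E(N)-\E(A\setminus B_R)$ with $N=B_{R'}\setminus B_R$, consider the case where $A\setminus B_R$ is a concentrated blob of volume $m$ sitting just outside $\partial B_R$: the rearrangement then moves mass \emph{outward} on average and spreads it over a thin shell, so the interaction term can decrease, while $\E(A\setminus B_R)$ can be as large as $\mbox{Const}\cdot m^{2-\lambda/n}$ and exceed $\E(N)$. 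There is no quantifiable gain to absorb these losses. The paper avoids this by putting the annulus at $B_r\setminus A^*$ with $r\le(1+\alpha_0)^{1/n}$, which is \emph{strictly inside} $B_R$ for $R=1+c_{n,\lambda}\alpha_0^{1-\lambda/n}$ (note $\alpha_0\ll\alpha_0^{1-\lambda/n}$). Every unit of moved mass then gains potential at least $\Phi_{A^*}\big\vert_{|x|=(1+\alpha_0)^{1/n}}-\Phi_{A^*}\big\vert_{|x|=R}\gtrsim c_{n,\lambda}\alpha_0^{1-\lambda/n}$, and this beats the error terms — the self-energy $\E(\tilde A\setminus A)\le\mbox{Const}\cdot\Vol(A\setminus B_R)\,\alpha_0^{1-\lambda/n}$ and the perturbations $|\Phi_A-\Phi_{A^*}|,|\Phi_{\tilde A}-\Phi_{A^*}|\le\frac{n\omega_n}{n-\lambda}\alpha_0^{1-\lambda/n}$ coming from Eq.~\eqref{eq:max} — once $c_{n,\lambda}$ is chosen large. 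That definite inward displacement, which your construction lacks, is the whole point of the exponent $1-\lambda/n$ in the statement.
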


\begin{proof}
By scaling, we may assume that $A^*$ is the unit ball,
i.e., $R_A=1$.  Given $R>(1+\alpha_0)^{1/n}$, determine $r\in (1,R)$ such that
$$
\tilde A = (A\cap B_R) \cup (B_r\setminus A^*)
$$ 
has the same volume as $A$. 
By construction,
$\Vol(\tilde A\bigtriangleup A^*)=\Vol(A\bigtriangleup A^*)$,
and $r\le (1+\alpha_0)^{1/n}$.

We want to choose 
$R$ so that $\E(\tilde A)\ge \E(A)$.
It follows from Eq.~\eqref{eq:split} that
$$
\E(A) \le \E(A\cap B_R) + 2 \Vol(A\setminus B_R)\cdot
\sup_{|x|\ge R}\Phi_A(x)\,.
$$
Since $\Phi_A\le \Phi_{A^*} + \Phi_{A\setminus A^*}$,
Eq.~\eqref{eq:max} implies
$$
\Phi_A(x) \le \Phi_{A^*}(x) + \frac{n\omega_n}{n-\lambda}
\alpha_0^{1-\lambda/n}\,.
$$
Similarly, since $\tilde A\cap A=A\cap B_R$ by construction,
\begin{align*}
\E(\tilde A) 
&= \int_{\tilde A\cap A} \Phi_{\tilde A\cap A}(x)\, dx + 
\int_{\tilde A\setminus A} 2\Phi_{\tilde A\cap A}(x) 
+ \Phi_{\tilde A\setminus A}(x)\, dx\\
&\ge \E(A\cap B_R) + 2 \Vol(A\setminus B_R)\cdot
\inf_{|x|\le r}\Phi_{\tilde A}(x) -\E(\tilde A \setminus A)\,,
\end{align*}
and
$$
\Phi_{\tilde A}(x) \ge \Phi_{A^*}(x) - \frac{n\omega_n}{n-\lambda}
\alpha_0^{1-\lambda/n}\,.
$$
We use Eq.~\eqref{eq:scaling} and the fact that 
$\Vol(\tilde A\setminus A)=\Vol(A\setminus B_R)\le \omega_n\alpha_0$
to estimate
$$
\E(\tilde A\setminus A) \le \mbox{Constant}\cdot\Vol(A\setminus B_R)
\alpha_0^{1-\frac{\lambda}{n}}\,.
$$
Collecting terms, we obtain that
$$
\E(\tilde A)-\E(A)
\ge 2 \Vol(A\setminus B_R)\cdot 
\biggl(\!\Phi_{A^*}\Big\vert^{|x|=(1+\alpha_0)^{1/n}}_{|x|=R}
\!\!\!\!
- \mbox{Constant}\cdot \alpha_0^{1-\frac{\lambda}{n}}
\biggr)\,.
$$
We have used that $\Phi_{A^*}$ is
radially decreasing to replace the inner radius
$r$ by $(1+\alpha_0)^{1/n}$.
Since $\Phi_{A^*}$ is a smooth, strictly
radially decreasing function whose gradient does
not vanish outside $A^*$, there exists a constant
$c_{n,\lambda}$ such that the right hand side
is positive for 
$$R=1+c_{n,\lambda}\alpha_0^{1-\lambda/n}$$
when $\alpha_0$ is sufficiently small. 
\end{proof}

We now introduce reflection symmetries to the problem.
The basic construction is as follows. 
Given a hyperplane that bisects $A$ into two 
parts of equal volume, the set $A$ is replaced by 
the union of one of these parts with its mirror image.
We refer to the two sets that can be obtained in this
way as {\em symmetrizations}
of $A$ at the hyperplane. Clearly,
the symmetrizations have the same volume as~$A$.

\bigskip\noindent {\bf FMP Symmetrization Lemma
\cite[Theorem 2.1]{FMP}.}
{\em 
For every $n\ge 1$ there is a positive
constant $c_n$
with the following property.
Given a set $A\subset \RR^n$ of finite positive volume,
there exists a set $\tilde A$ obtained by successive
symmetrization of $A$ at $n$ orthogonal hyperplanes such that}
$$
\alpha(\tilde A)\ge c_n\alpha(A)\,.
$$

\begin{lemma}
\label{lem:ref-positive}
If $\lambda\in [n-2,n)$, then the set constructed in the FMP lemma
satisfies
$$
\delta(\tilde A) \le 2^n \delta(A)\,.
$$
\end{lemma}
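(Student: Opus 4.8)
The plan is to reduce to a single symmetrization step and then iterate. Since a symmetrization preserves volume, $\tilde A$ and $A$ have the same symmetric rearrangement $A^*$, so the asserted bound $\delta(\tilde A)\le 2^n\delta(A)$ is equivalent to the energy inequality $\E(A^*)-\E(\tilde A)\le 2^n\bigl(\E(A^*)-\E(A)\bigr)$. The FMP lemma builds $\tilde A$ from exactly $n$ successive symmetrizations, so it suffices to show that each individual step at most doubles the deficit: if $B$ has finite positive volume and $\hat B$, $\check B$ denote the two symmetrizations of $B$ at a hyperplane $H$, then $\delta(\hat B)\le 2\delta(B)$ and $\delta(\check B)\le 2\delta(B)$.

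For the single-step estimate, let $\theta$ be the reflection in $H$ and split $B=B^+\cup B^-$ into the parts lying in the two closed half-spaces bounded by $H$, so that $\Vol(B^+)=\Vol(B^-)$, $\hat B=B^+\cup\theta B^+$, and $\check B=B^-\cup\theta B^-$. Using the shorthand $I(E,F)=\int_E\int_F|x-y|^{-\lambda}\,dx\,dy$ and the invariance $|\theta x-\theta y|=|x-y|$, expanding the three energies gives
$$
\E(\hat B)+\E(\check B)-2\E(B)=2\Bigl(I(B^+,\theta B^+)+I(B^-,\theta B^-)-2\,I(B^+,B^-)\Bigr)\,.
$$
The quantity in parentheses is exactly $\int\int f(x)\,|x-\theta y|^{-\lambda}\,f(y)\,dx\,dy$ with $f=\Chi_{B^+}-\Chi_{\theta B^-}$: substituting $z=\theta y$ (and $w=\theta x$ where needed) into the four terms of $f(x)f(y)$ turns them into $I(B^+,\theta B^+)$, $-I(B^+,B^-)$, $-I(B^+,B^-)$, and $I(B^-,\theta B^-)$ respectively. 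Since $f$ is supported in the half-space on one side of $H$ and $|x-y|^{-\lambda}$ is reflection positive for $\lambda\in(0,n)$~\cite{FL}, this integral is nonnegative, hence $\E(\hat B)+\E(\check B)\ge 2\E(B)$.

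Finally, $\hat B$ and $\check B$ have the same volume as $B$, so the same rearrangement $B^*$, whence $\E(\hat B)\le\E(B^*)$ and $\E(\check B)\le\E(B^*)$ by the Riesz--Sobolev inequality~\eqref{eq:Riesz}. Combining with $\E(\hat B)+\E(\check B)\ge 2\E(B)$ gives $\E(\hat B)\ge 2\E(B)-\E(B^*)$, i.e. $\E(B^*)-\E(\hat B)\le 2\bigl(\E(B^*)-\E(B)\bigr)$, so $\delta(\hat B)\le 2\delta(B)$, and symmetrically $\delta(\check B)\le 2\delta(B)$. Applying this to each of the $n$ steps in the FMP construction yields $\delta(\tilde A)\le 2^n\delta(A)$. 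The only place requiring genuine care is the identity displayed above together with the correct bookkeeping of which half-space integrals the four terms reduce to after applying $\theta$; note that the argument is entirely insensitive to the relative position of the $n$ hyperplanes, since we never need later symmetrizations to respect symmetries created by earlier ones — that is the content of the FMP lemma and plays no role here.
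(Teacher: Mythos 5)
Your proof is correct and follows essentially the same route as the paper: a single symmetrization step at most doubles the deficit because $\E(A_+)+\E(A_-)\ge 2\E(A)$ by reflection positivity (which you derive explicitly from the kernel identity, where the paper simply cites Frank--Lieb), combined with $\E(A_\pm)\le\E(A^*)$, and then one iterates $n$ times. The only quibble is your parenthetical claim that $|x-y|^{-\lambda}$ is reflection positive for all $\lambda\in(0,n)$ --- it fails for $\lambda<n-2$, which is precisely why the lemma assumes $\lambda\in[n-2,n)$; since you only invoke it in that range, the proof stands.
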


\begin{proof} Consider the two possible symmetrizations
$A_+$ and $A_-$ of
$A$ at a single hyperplane.  Since $\lambda\in [n-2,n)$,
the functional is reflection positive, meaning that
$$
\E(A_+) + \E(A_-) \ge 2 \E(A)\,,
$$
see~\cite[Section 1.1]{FL}.
Using that $(A_+)^*=(A_-)^*=A^*$,  we subtract
both sides of the inequality from $\E(A^*)$ to obtain
$$\delta(A_+) + \delta(A_-) \le 2 \delta(A)\,,
$$
and conclude that the symmetrized sets satisfy
$\delta(A_\pm)\le 2 \delta(A)$.
The claim follows by repeating the construction
$n$ times.
\end{proof}

We translate and rotate $\tilde A$
to a set that is symmetric at the coordinate hyperplanes,
and thus symmetric under $x\mapsto -x$.
Such sets
have the useful property that their asymmetry 
is comparable to their symmetric  difference 
from a  {\em centered} ball~\cite[Lemma 2.2]{FMP}. 
The following estimate for the potential 
is the key to the proof of Theorem~\ref{thm:sharp-3}.

\begin{lemma}  
\label{lem:key-3}
If $A\subset B_r$ is symmetric about the origin, then
$$
\Phi_A(x) \le \Phi_{B_r}(x) - (\sqrt2 r)^{-\lambda}
\Vol(B_r\setminus A)
$$
for all $x\in\partial B_r$.
\end{lemma}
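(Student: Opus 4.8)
The plan is to write the difference of potentials as an integral over $B_r\setminus A$ and then exploit the reflection symmetry $y\mapsto-y$. Since $A\subset B_r$, for every $x$ we have
$$
\Phi_{B_r}(x)-\Phi_A(x)=\int_{B_r\setminus A}|x-y|^{-\lambda}\,dy\,,
$$
so the assertion is equivalent to the pointwise-in-$x$ lower bound $\int_{B_r\setminus A}|x-y|^{-\lambda}\,dy\ge(\sqrt2\,r)^{-\lambda}\Vol(B_r\setminus A)$ for $x\in\partial B_r$. Because $A$ and $B_r$ are both symmetric about the origin, the domain $B_r\setminus A$ is invariant under $y\mapsto-y$, and therefore
$$
\int_{B_r\setminus A}|x-y|^{-\lambda}\,dy=\frac12\int_{B_r\setminus A}\bigl(|x-y|^{-\lambda}+|x+y|^{-\lambda}\bigr)\,dy\,.
$$

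The heart of the argument is the elementary pointwise estimate that, for $|x|=r$ and $|y|\le r$,
$$
\tfrac12\bigl(|x-y|^{-\lambda}+|x+y|^{-\lambda}\bigr)\ge(\sqrt2\,r)^{-\lambda}\,.
$$
To prove it I would use the parallelogram identity $|x-y|^2+|x+y|^2=2|x|^2+2|y|^2=2r^2+2|y|^2\le4r^2$, together with the fact that $t\mapsto t^{-\lambda/2}$ is convex and nonincreasing on $(0,\infty)$ for $\lambda\ge0$. Jensen's inequality then gives
$$
\tfrac12\bigl(|x-y|^{-\lambda}+|x+y|^{-\lambda}\bigr)\ge\Bigl(\tfrac{|x-y|^2+|x+y|^2}{2}\Bigr)^{-\lambda/2}=(r^2+|y|^2)^{-\lambda/2}\ge(2r^2)^{-\lambda/2}=(\sqrt2\,r)^{-\lambda}\,.
$$
Integrating this inequality over $y\in B_r\setminus A$ and inserting it into the symmetrized integral from the previous step yields precisely the claimed bound.

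The only point requiring any ingenuity is to combine the reflection symmetry of $B_r\setminus A$ with the parallelogram law: once a point $y$ is paired with its mirror image $-y$, the midpoint $r^2+|y|^2$ of $|x-y|^2$ and $|x+y|^2$ never exceeds $2r^2$, and the estimate collapses to convexity of a power function. No regularity, subharmonicity, or Talenti-type input is needed, and the argument works verbatim for every $\lambda\ge0$ (for $\lambda=0$ both sides are trivially equal). I expect no genuine obstacle in the proof of this lemma; its role, rather, is to supply the quantitative gap $\Phi_{B_r}-\Phi_A$ on $\partial B_r$ that will be fed into Lemma~\ref{lem:outside} to prove Theorem~\ref{thm:sharp-3} for sets symmetric at the coordinate hyperplanes.
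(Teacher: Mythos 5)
Your proof is correct and follows essentially the same route as the paper: pair $y$ with $-y$ using the symmetry of $B_r\setminus A$ and bound the symmetrized kernel below by $(\sqrt2\,r)^{-\lambda}$. The only difference is that the paper simply asserts that the minimum of $\frac12(|x-y|^{-\lambda}+|x+y|^{-\lambda})$ over $\overline{B_r}$ is $(\sqrt2\,r)^{-\lambda}$, whereas you supply the clean justification via the parallelogram identity and convexity of $t\mapsto t^{-\lambda/2}$.
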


\begin{proof} Let $x\in\partial B_r$ be given. The function
$$
f(y) = \frac{1}{2}\left(
|y-x|^{-\lambda} + |y+x|^{-\lambda}\right)
$$
assumes its minimum at a point on $\partial B_r$ 
equidistant to $x$ and $-x$, and the minimum value is
$(\sqrt2 r)^{-\lambda}$. Since $A$ is symmetric, 
$$
\Phi_A(x) = \int_A 
|x-y|^{-\lambda}\, dy
\ge (\sqrt2 r)^{-\lambda} \Vol(A)\,.
$$
The claim follows by replacing $A$ with $B_r\setminus  A$.
\end{proof}

For the Newton potential of $A\subset B_r$,
Lemma~\ref{lem:key-3} implies that
\begin{align}
\notag
\Phi_{A^*}\Big\vert_{\partial A^*} \!\! - 
\sup\,\Phi_{A}\Big\vert_{\partial B_r}
&\ge
\omega_n \left( R_A^2-r^2 + \frac{r^n-R_A^n}{(\sqrt2 r)^{n-2}}
\right)\\
\label{eq:key-3}
&=\omega_n R_A^2 \bigl(-2+n2^{1-\frac n2}\bigr)
\bigl(\tfrac{r}{R_A}-1\bigr)
+ O\bigl(\tfrac{r}{R_A}-1\bigr)^2
\end{align} 
uniformly in $A$ as $\frac{r}{R_A}\to 1$. 
Note that the leading term is positive in dimension $n=3$.

\section{Proof of the main results}

\begin{proof}[Proof of Theorem~\ref{thm:sharp-3}]
We specialize to the case of the
Coulomb energy in $\RR^3$, where $\lambda=1$.
We want to find a constant $c>0$ such that
$\delta(A)\ge c\alpha(A)^2$
for all sets of finite positive volume $A\subset\RR^3$.
By scaling, we may assume that
$\Vol(A)= \omega_3=4\pi/3$, so that
$A^*$ is the unit ball. Since $\alpha(A)\le 1$ by definition, it suffices
to prove the claim for $\alpha$ sufficiently small.

By Lemma~\ref{lem:ref-positive} we may assume that
$A$ is symmetric about the origin.   Therefore,
by \cite[Lemma 2.2]{FMP},
$$
\alpha_0:= \Vol(A\bigtriangleup A^*)/(2\omega_3)
\le 3\alpha(A)\,.
$$
By Lemma~\ref{lem:bdd} we may assume furthermore
that $A$ lies in the ball of radius 
$$
R_0=1+c_{3,1}\alpha_0^{\frac23}\,,
$$
provided that $3\alpha(A)\le \alpha_{3,1}$.  We use Lemma~\ref{lem:outside}
with $R=1$ to see that
\begin{align*}
\E(A^*) - \E(A) \ge 2\int_1^{R_0}\int_{A\cap \partial B_r} 
\Phi_{(A\cap B_r)^*}\Big\vert_{\partial B_{\rho(r)}}  \!\!\!\! -
\Phi_{A\cap B_r}(x)\, d\sigma(x)\,dr\,,
\end{align*}
where $\rho(r)$ is the volume radius of $A\cap B_r$.
By Eq.~\eqref{eq:key-3},
the integrand is bounded from
below by
\begin{align*}
\Phi_{(A\cap B_r)^*}\Big\vert_{\partial B_{\rho(r)}} \!\!\!\!  -
\Phi_{A\cap B_r}(x)
&\ge \omega_3\,\inf_{1\le r\le R_0}
\left\{\rho(r)^2-r^2 +\frac{r^3-\rho(r)^3}{\sqrt2 r}\right\}\,.
\end{align*}
The function inside the braces can be written
as a product
$$
(r^3-\rho(r)^3) 
\left(-\frac{r+\rho(r)}{r^2+r\rho(r)+\rho(r)^2}
+ \frac{1}{\sqrt{2}r}\right)\,.
$$
Since the first factor is non-decreasing in $r$,
it is bounded from below by $1-\rho(1)^3=\alpha_0$.
This gives for the integral
$$
\E(A^*)-\E(A)\ge \frac{8\pi}{3}\alpha_0^2 
\cdot \inf_{(1-\alpha_0)^{1/3}\le \rho\le r\le R_0}
\left\{-\frac{r+\rho}{r^2+r\rho+\rho^2}  
+ \frac{1}{\sqrt{2}r} \right\}\,.
$$
The infimum is strictly positive for $\alpha_0$ sufficiently small. 
Since $\alpha_0\ge \alpha(A)$ by definition, the theorem follows.
\end{proof}

The proof of Theorem~\ref{thm:sharp-3} 
used that the Coulomb kernel $|x|^{-1}$ is symmetric
decreasing and reflection positive,
without taking advantage of 
the special properties of the Newton potential.
Since all estimates used in the proof
depend continuously on $\lambda$, the conclusion extends
to nearby values.

\medskip\noindent{\bf Corollary} {\em 
Let $\E_\lambda$ be defined
by Eq.~\eqref{eq:E-lambda} for $n=3$ and $\lambda>1$,
and let $\delta_\lambda$ be the corresponding deficit 
given by Eq.~\eqref{def:delta}.
For every $\lambda$ sufficiently close to 1 there exists a constant
$c_\lambda$ such that
$$
\delta_\lambda(A)\ge c_\lambda\alpha(A)^2
$$
for all $A\subset\RR^3$.}

\medskip 
Finally we turn to the Coulomb energy in dimension $n\ge 3$. 

\begin{proof}[Proof of Theorem~\ref{thm:main}.]
Let $n\ge 3$ and $\lambda=n-2$.
Assume, by scaling, that $A^*$ is the unit ball,
and let $\alpha=\alpha(A)$ be the asymmetry of $A$.
Since $\int_{A} \Phi_A \le \int_{A^*}(\Phi_A)^*$,
$$
\E(A^*)-\E(A)\ge \int_{A^*} \Phi_{A^*} -\bigl(\Phi_A\bigr)^*\,dx\,.
$$
By Talenti's comparison principle, the integrand
is nonnegative. Moreover, by Lemma~\ref{lem:max} and
Eq.~\eqref{eq:Phi-sym},
\begin{align*} \Phi_{A^*}(x) -\bigl(\Phi_A\bigr)^*(x) 
&\ge \Phi_{A^*}(x) - \sup_y \Phi_A(y)\\
&\ge
\frac{n-2}{2} \omega_n
\left(\frac{2\alpha^2}{n} 
-|x|^2 \right)\,.
\end{align*}
Integration yields
\begin{align*}
\E(A^*)-\E(A)
&\ge \frac{n-2}{2}\omega_n \int_{A^*}
\Bigl[\frac{2\alpha^2}{n}-|x|^2\Bigr]_+\, dx\\
& =\frac{n-2}{2n}\E(A^*)\cdot 
\biggl(\frac{\sqrt2 \alpha}{\sqrt n}\biggr)^{n+2} \,,
\end{align*}
which proves Eq.~\eqref{eq:main} with $c_n = (n-2)2^{n/2}/n^{2+n/2}$.
\end{proof}


\section*{Acknowledgments} The authors thank
Nicola Fusco, Francesco Maggi, and Aldo Pratelli 
for sharing their manuscript~\cite{FMP-elliptic},
and for their hospitality on several occasions.
This work was supported in part by the Federal
Government of Canada through an NSERC CGS Fellowship
(G.R.C.) and a Discovery Grant (A.B.),
and by the Province of Ontario through
an OGS Fellowship (G.R.C.).

\newpage


\end{document}